\newtheorem{definition}{\bf Definition}[section]
\newtheorem{proposition}{\bf Proposition}[section]
\newtheorem{theorem}{\bf Theorem}[section]
\newtheorem{corollary}{\bf Corollary}[section]
\newtheorem{remark}{Remark}[section]
\newtheorem{example}{Example}[section]
\DeclareMathOperator*{\rng}{rng}
\newcounter{ACP1}
\newenvironment{ACP1}{\refstepcounter{ACP1}\equation}{\tag{ACP}\endequation}
\title{Category theorems for Schr\"odinger semigroups}
\author{Moacir Aloisio, Silas L. Carvalho, and C\'{e}sar R. de Oliveira\thanks{Corresponding author. Telephone +55 16 3351 9153, fax +55 16 3361 2081.}}
\date{November 2019}
\begin{document}

\maketitle

\begin{abstract} Stimulated by the category theorems of Eisner and Ser\'eny in the setting of unitary and isometric $C_0$-semigroups on separable Hilbert spaces, we prove category theorems for Schr\"odinger semigroups. Specifically, we show that, to a given class of Schr\"{o}dinger semigroups, Baire generically the semigroups are strongly  stable but not exponentially stable. We also present a typical spectral property of the corresponding Schr\"{o}dinger operators. 
\end{abstract} 

\

\noindent{\bf Keywords}: Schr\"odinger semigroups, strongly stable $C_0$-semigroups, exponential stability, Baire category theorem.

\

\noindent{\bf  AMS classification codes}: 47D06 47D08 34D20 47A10


\section{Introduction}

\noindent A central question in the theory of differential equations refers to the asymptotic behaviour of their solutions; for instance, whether they reach an equilibrium and, if so, with which speed. This kind of question is addressed by the asymptotic theory of $C_0$-semigroups. More specifically, here we consider the theory of stability for solutions of the abstract Cauchy problem on a Hilbert space $\mathcal H$, that is,
\begin{ACP1}\label{cauchyproblem}
\begin{cases} \dot{x}(t) = A x(t), \; t \geq 0, \\ x(0) = x, \; x \in {\mathcal{H}},  \end{cases}
\end{ACP1}
where~$A$ is the generator of a $C_0$-semigroup~$(T(t))_{t\geq 0}$ on~${\mathcal{H}}$.  

We denote the resolvent set of~$A$ by $\varrho(A)$, that is, the set of all $\lambda \in {\mathbb{C}}$ for which the resolvent operator of $A$ at $\lambda$,
\[R(\lambda,A) : {\cal{H}} \longrightarrow {\cal{D}}(A), \, \, \, R(\lambda,A) := (\lambda I - A)^{-1},\]
exists and is bounded. The spectrum of $A$ is denoted by $\sigma(A) = {\mathbb{C}}\backslash \rho(A)$. 

We recall that a $C_0$-semigroup~$(T(t))_{t\geq 0}$ on ${\mathcal{H}}$ is said to be bounded if there exists a constant $C>0$ so that, for each $t \geq 0$, $\|T(t)\|_{{\mathcal{B}}({\mathcal{H}})} \leq C$; if $C=1$, then it is called a $C_0$-semigroup of contractions. 

We also recall that $(T(t))_{t\geq 0}$ is (strongly) stable if, for every $x\in \mathcal{H}$, 
\[\lim_{t \to \infty} \|T(t)x\|_{{\mathcal{H}}} =0;\] 
\noindent $(T(t))_{t\geq 0}$ is exponentially stable if there exist constants $C> 0$ and $a>0$ such that, for every $t \geq 0$,
\[\|T(t)\|_{{\mathcal{B}}({\mathcal{H}})}\leq C\, e^{-ta}.\]
 
\subsection*{Stability and spectrum}
 
\noindent Over the three last decades, the asymptotic theory of $C_0$-semigroups on Hilbert spaces had a fast development, with the solution of a large number of long-standing open problems. Among such problems, one can highlight the characterization of exponential stability for $C_0$-semigroups of contractions on Hilbert spaces, which has been proven independently by Herbst \cite{Herbst}, Howland \cite{Howland} and Pr\"{u}ss~\cite{Pruss}, 

\begin{theorem}[Gearhart-Pr\"{u}ss Theorem]\label{prusstheorem} A $C_0$-semigroup $(T(t))_{t\geq 0}$ of contractions on a Hilbert space~$\mathcal{H}$, with generator $A$, is exponentially stable if, and only if,  
\[i {\mathbb{R}} \subset \varrho(A) \quad  and \quad \limsup_{|\lambda| \to \infty} \|R(i\lambda,A)\|_{\mathcal{B}(\mathcal{H})} < \infty.\]
\end{theorem}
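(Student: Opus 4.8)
The plan is to establish the two implications separately. The forward one is elementary and works over any Banach space; the converse is the substantive half, and it is where the Hilbert space geometry — through Plancherel's theorem — becomes indispensable.

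\emph{Necessity.} I would start from $\|T(t)\|_{\mathcal B(\mathcal H)}\le C e^{-at}$ with $C,a>0$. Then the exponential growth bound of $(T(t))_{t\ge 0}$ is at most $-a<0$, so the resolvent admits the Laplace representation $R(\lambda,A)=\int_0^\infty e^{-\lambda t}T(t)\,dt$ on the half-plane $\{\operatorname{Re}\lambda>-a\}$, which contains $i\mathbb R$. Hence $i\mathbb R\subset\varrho(A)$, and majorizing the integral gives $\|R(i\mu,A)\|_{\mathcal B(\mathcal H)}\le\int_0^\infty\|T(t)\|_{\mathcal B(\mathcal H)}\,dt\le C/a$ for every $\mu\in\mathbb R$; in particular the $\limsup$ is finite.

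\emph{Sufficiency.} Now assume $i\mathbb R\subset\varrho(A)$ and $\limsup_{|\mu|\to\infty}\|R(i\mu,A)\|<\infty$. Since $\mu\mapsto R(i\mu,A)$ is norm-continuous on $i\mathbb R$, the hypothesis upgrades to $M:=\sup_{\mu\in\mathbb R}\|R(i\mu,A)\|<\infty$. Expanding the resolvent in a Neumann series around the point $i\mu$ gives $R(\varepsilon+i\mu,A)=\bigl(I+\varepsilon R(i\mu,A)\bigr)^{-1}R(i\mu,A)$ for $\varepsilon<1/M$, whence $\|R(\varepsilon+i\mu,A)\|\le 2M$ for all $\mu\in\mathbb R$ and all $\varepsilon\in(0,\delta]$, where $\delta:=1/(2M)$. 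The core of the argument is then the following $L^2$-admissibility estimate: for every $f\in L^2(\mathbb R_+;\mathcal H)$ the mild solution $u(t):=\int_0^t T(t-s)f(s)\,ds$ belongs to $L^2(\mathbb R_+;\mathcal H)$, with $\|u\|_{L^2}\le 2M\,\|f\|_{L^2}$. To prove this I would fix $\varepsilon\in(0,\delta]$ and observe that $u_\varepsilon(t):=e^{-\varepsilon t}u(t)$ is the convolution $k_\varepsilon\ast g_\varepsilon$, where $k_\varepsilon(r):=e^{-\varepsilon r}T(r)\,\mathbf 1_{[0,\infty)}(r)\in L^1(\mathbb R;\mathcal B(\mathcal H))$ and $g_\varepsilon(s):=e^{-\varepsilon s}f(s)\,\mathbf 1_{[0,\infty)}(s)\in L^2(\mathbb R;\mathcal H)$. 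Since $\widehat{k_\varepsilon}(\mu)=\int_0^\infty e^{-(\varepsilon+i\mu)r}T(r)\,dr=R(\varepsilon+i\mu,A)$, the convolution theorem and Plancherel's identity on $L^2(\mathbb R;\mathcal H)$ yield $\|u_\varepsilon\|_{L^2}^2=\tfrac1{2\pi}\int_{\mathbb R}\|R(\varepsilon+i\mu,A)\widehat{g_\varepsilon}(\mu)\|^2\,d\mu\le(2M)^2\|g_\varepsilon\|_{L^2}^2\le(2M)^2\|f\|_{L^2}^2$; letting $\varepsilon\downarrow 0$ and invoking Fatou's lemma gives the estimate.

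To finish, I would feed this estimate the particular input $f(s):=T(s)x\,\mathbf 1_{[0,1]}(s)$, $x\in\mathcal H$. Contractivity gives $\|f\|_{L^2}\le\|x\|$, while the semigroup law gives $u(t)=\min\{t,1\}\,T(t)x$, so that $u(t)=T(t)x$ for $t\ge1$; therefore $\int_0^\infty\|T(t)x\|^2\,dt\le\|x\|^2+\|u\|_{L^2}^2\le(1+4M^2)\,\|x\|^2$ for every $x\in\mathcal H$. By the classical Datko--Pazy theorem, square-integrability of every orbit forces $(T(t))_{t\ge0}$ to be exponentially stable, which completes the proof. I expect the main obstacle to be precisely the $L^2$-admissibility estimate: this is the one step where the Hilbert space structure is used essentially (through Plancherel), and it is well known that the equivalence is false on general Banach spaces, so no argument relying only on resolvent bounds can substitute for it.
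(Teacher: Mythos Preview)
Your argument is correct. The paper, however, does not give its own proof of this theorem: it is quoted as a classical result and attributed to Herbst, Howland and Pr\"uss, so there is no in-paper proof to compare against. What you have written is one of the standard proofs of the Gearhart--Pr\"uss theorem for contraction semigroups---uniform resolvent bound on $i\mathbb{R}$, extension to a strip by Neumann series, the Plancherel/convolution argument to obtain $L^2$-boundedness of the mild-solution operator, and finally Datko--Pazy to conclude exponential stability. Your remark that Hilbert-space geometry enters precisely through Plancherel, and that this is unavoidable (the equivalence failing on general Banach spaces), is also to the point.
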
 

Gearhart-Pr\"{u}ss Theorem relates properties of the resolvent of the generator to decaying rates of the semigroup. From the point of view of  applications, since the resolvent of the generator is often easier to compute than the semigroup, this result has been used to obtain numerous applications to PDEs. In this context, we refer to \cite{Anantharaman,Conti,Riveraa,van}, among others. We also mention the papers \cite{Batty,Borichev,Rozendaal} for modern results and additional comments about the asymptotic theory of $C_0$-semigroups. 

We note that this relation between properties of the resolvent of the generator and decaying rates of the semigroup established by Gearhart-Pr\"{u}ss Theorem can be reformulated in the case that $A$ is a negative self-adjoint operator (see Corollary \ref{prusscor} below). 

It follows from the Spectral Theorem that every negative self-adjoint operator $A$ on a Hilbert space~$\mathcal{H}$ generates a self-adjoint $C_0$-semigroup of contractions
\[e^{tA} = \int_{\sigma(A)} e^{t\lambda} \, \, \, dE^A(\lambda),\]
where $E^A$ is the resolution of the identity of $A$; so, for every $x \in \mathcal{H}$, 
\[\langle e^{tA}x,x\rangle  = \int_{\sigma(A)} e^{t\lambda} \, \, \, d\mu_x^A(\lambda),\]
 where $\mu_x^A$ denote the spectral measure of~$A$ associated with $x$. It is well known that every self-adjoint $C_0$-semigroup of contractions is of this form \cite{Rudin}.
 
 Note that it is always true that, in this case, 
\[\limsup_{|\lambda| \to \infty} \|R(i\lambda,A)\|_{\mathcal{B}(\cal{H})} = 0.\]
Hence, since $\sigma(A) \subset {\mathbb{R}_-}$,  the following result is a direct consequence of Theorem \ref{prusstheorem}.

\begin{corollary}\label{prusscor} Let $A$ be a negative self-adjoint operator. Then, $(e^{tA})_{t\geq 0}$  is exponentially stable if, and only if, $0 \not \in \sigma(A)$. 
\end{corollary}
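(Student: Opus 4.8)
The plan is to derive Corollary~\ref{prusscor} directly from Theorem~\ref{prusstheorem} and the two observations about self-adjoint semigroups recorded just above the statement. Since $A$ is negative self-adjoint, $\sigma(A)\subset\mathbb{R}_-$, so the spectral mapping for the resolvent gives $i\mathbb{R}\subset\varrho(A)$ automatically; moreover, for a self-adjoint operator $\|R(i\lambda,A)\|_{\mathcal{B}(\mathcal{H})}=\operatorname{dist}(i\lambda,\sigma(A))^{-1}$, which tends to $0$ as $|\lambda|\to\infty$ because $\sigma(A)$ lies in a half-line bounded above by $0$. Hence $\limsup_{|\lambda|\to\infty}\|R(i\lambda,A)\|_{\mathcal{B}(\mathcal{H})}=0<\infty$ unconditionally, so the second hypothesis of Theorem~\ref{prusstheorem} is always met. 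The upshot is that exponential stability of $(e^{tA})_{t\ge0}$ is equivalent to the single remaining condition $i\mathbb{R}\subset\varrho(A)$.

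So the whole content of the corollary reduces to showing that, for negative self-adjoint $A$, one has $i\mathbb{R}\subset\varrho(A)$ if and only if $0\notin\sigma(A)$. One direction is trivial: if $0\in\sigma(A)$ then $0\in i\mathbb{R}$ is a spectral point, so $i\mathbb{R}\not\subset\varrho(A)$. Conversely, if $0\notin\sigma(A)$, then since $\sigma(A)\subset\mathbb{R}_-$ is closed and misses $0$, it is contained in $(-\infty,-\delta]$ for some $\delta>0$; in particular $\sigma(A)$ is disjoint from the imaginary axis, so $i\mathbb{R}\subset\varrho(A)$. I would also remark that $0\notin\sigma(A)$ is exactly the statement that $A$ is invertible with bounded inverse, equivalently that $A\le -\delta I$ for some $\delta>0$, which makes the exponential decay $\|e^{tA}\|_{\mathcal{B}(\mathcal{H})}\le e^{-\delta t}$ visible directly from the Spectral Theorem — providing an independent, self-contained check of the forward implication.

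I do not expect a serious obstacle here: the corollary is essentially a specialization of the Gearhart--Pr\"uss Theorem in which one of the two resolvent hypotheses is automatic. The only point requiring a line of care is the identity $\|R(i\lambda,A)\|=\operatorname{dist}(i\lambda,\sigma(A))^{-1}$ for self-adjoint $A$, which follows from the Spectral Theorem (the functional calculus gives $\|R(i\lambda,A)\|=\sup_{s\in\sigma(A)}|i\lambda-s|^{-1}$), together with the elementary estimate that for $s\le 0$ and $\lambda\in\mathbb{R}$ one has $|i\lambda-s|=\sqrt{\lambda^2+s^2}\to\infty$ uniformly in $s$ as $|\lambda|\to\infty$. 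With these facts in hand the proof is just the two-line equivalence above, so I would keep it short and mainly cite Theorem~\ref{prusstheorem} and the preceding discussion.
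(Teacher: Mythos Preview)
Your argument is correct and follows the same route as the paper, which simply observes that for negative self-adjoint $A$ one always has $\limsup_{|\lambda|\to\infty}\|R(i\lambda,A)\|_{\mathcal{B}(\mathcal{H})}=0$ and $\sigma(A)\subset(-\infty,0]$, so Theorem~\ref{prusstheorem} reduces exponential stability to the single condition $0\notin\sigma(A)$. There is a small slip in your opening sentence --- you write that $i\mathbb{R}\subset\varrho(A)$ holds \emph{automatically}, whereas only $i\mathbb{R}\setminus\{0\}\subset\varrho(A)$ is automatic --- but you immediately treat $i\mathbb{R}\subset\varrho(A)$ as the ``remaining condition'' and prove it is equivalent to $0\notin\sigma(A)$, so the argument stands.
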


The problem of obtaining lower bounds for the decaying rates of stable $C_0$-semigroups goes through the understanding of how the spectrum of the generator touches or approaches the imaginary axis. In this context, stimulated by the category theorems of Eisner and Ser\'eny in \cite{Eisner}, which show that the set of all weakly stable unitary groups (isometric semigroups) is of first category, while the set of all almost weakly stable unitary groups (isometric semigroups) is residual for an appropriate topology, in this paper we use Corollary \ref{prusscor} to prove category theorems for Schr\"odinger semigroups (Theorem \ref{maintheoremcategory}). To the best of our knowledge, none of this has been detailed in the literature yet.

\subsection*{Main results}

\noindent Fix $a>0$ and consider the family of negative Schr\"{o}dinger operators $H_V$,  defined on ${{\mathcal{H}}^2}({{\mathbb{R}}}^\nu)$, $\nu\in\mathbb{N}$, by the action 
\[(H_V u)(x) := \triangle u(x) + V(x)u(x),\]
with  $V \in {\cal{B}}^\infty({{\mathbb{R}}}^\nu)$ (the space of bounded Borel functions) such that, for each $x \in {\mathbb{R}}^\nu$, $-a \leq V(x) \leq 0$. Denote by $X_a^\nu$ the set of these operators endowed with the metric
\[d(H_V,H_U):=\displaystyle\sum_{j= 0}^\infty \min(2^{-j},\Vert V-U\Vert_j),\]
where $\Vert V-U\Vert_j:= \displaystyle\sup_{x\in B(0,j)}\vert V(x)-U(x)\vert$. $X_a^\nu$ is (by Tychonoff's theorem) a compact metric space so that convergence in the metric implies strong resolvent convergence (Definition \ref{convergence}). Namely, if $d(H_{V_k},H_V)\rightarrow 0$ in $X_a^\nu$ (so, for every $x \in {\mathbb{R}}^\nu$, $\displaystyle\lim_{k \to \infty} V_k(x) = V(x)$), then, for each $u \in {\mathrm L}^2({\mathbb{R}}^\nu)$, by the second resolvent identity and  dominated convergence, 
\begin{eqnarray*}\| (R_i(H_{V_k}) - R_i(H_{V}))u \|_{{\mathrm L}^2({\mathbb{R}}^\nu)} &=& \| R_i(H_{V_k})(V_k - V)R_i(H_V)u \|_{{\mathrm L}^2({\mathbb{R}}^\nu)}\\
&\leq& \|(V_k - V)R_i(H_V)u \|_{{\mathrm L}^2({\mathbb{R}}^\nu)} \longrightarrow 0
\end{eqnarray*}
as $k \rightarrow \infty$.

We shall prove the following result.

\begin{theorem}\label{maintheoremcategory}
For each $a>0$ and each $\nu\in\mathbb{N}$, 
\[\{H \in X_a^\nu \mid (e^{tH})_{t\geq 0} \text{ \ is stable but not exponentially stable}\}\]
is a dense $G_\delta$ set in~$X_a^\nu$.
\end{theorem}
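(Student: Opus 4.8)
The plan is to use Corollary~\ref{prusscor} to convert the two asymptotic conditions into spectral conditions on the self-adjoint operators $H_V$, and then to check directly that the resulting set is a dense $G_\delta$ (so the Baire property of $X_a^\nu$ is not even needed for the literal statement). For every $V\in X_a^\nu$ the operator $H_V=\triangle+V$ is self-adjoint on ${\mathcal{H}}^2({\mathbb{R}}^\nu)$ and satisfies $H_V\le 0$, so $\sigma(H_V)\subset(-\infty,0]$; by Corollary~\ref{prusscor}, $(e^{tH_V})_{t\ge0}$ fails to be exponentially stable exactly when $0\in\sigma(H_V)$. I would next observe that strong stability is automatic on $X_a^\nu$, hence imposes no further restriction: the spectral theorem gives $\|e^{tH_V}u\|_{{\mathrm L}^2}^2=\int_{(-\infty,0]}e^{2t\lambda}\,d\mu_u^{H_V}(\lambda)$, where the integrand decreases pointwise to $\chi_{\{0\}}(\lambda)$ as $t\to\infty$, so dominated convergence yields $\|e^{tH_V}u\|_{{\mathrm L}^2}^2\to\mu_u^{H_V}(\{0\})$, and $(e^{tH_V})_{t\ge0}$ is strongly stable iff $0$ is not an eigenvalue of $H_V$; but $H_Vu=0$ forces $0=\langle H_Vu,u\rangle=-\|\nabla u\|_{{\mathrm L}^2}^2+\int V|u|^2\le-\|\nabla u\|_{{\mathrm L}^2}^2\le0$, whence $\nabla u=0$ and $u=0$ in ${\mathrm L}^2({\mathbb{R}}^\nu)$. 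Thus the set in Theorem~\ref{maintheoremcategory} coincides with $S:=\{H\in X_a^\nu\mid 0\in\sigma(H)\}$, and the task reduces to showing that $S$ is a dense $G_\delta$.

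For the $G_\delta$ part I would introduce $m(V):=\inf\sigma(-H_V)=\inf\{\|\nabla u\|_{{\mathrm L}^2}^2-\int V|u|^2 : u\in{\mathcal{H}}^2({\mathbb{R}}^\nu),\ \|u\|_{{\mathrm L}^2}=1\}\ge0$; since $-H_V\ge0$, one has $0\in\sigma(H_V)\iff m(V)=0$. For each fixed unit vector $u$ the map $V\mapsto\|\nabla u\|_{{\mathrm L}^2}^2-\int V|u|^2$ is continuous on $X_a^\nu$, because $d(H_{V_k},H_V)\to0$ implies $V_k\to V$ pointwise with $|V_k|\le a$ and hence $\int V_k|u|^2\to\int V|u|^2$ by dominated convergence (the same estimate as in the computation preceding the theorem). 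Being the infimum of a family of continuous functions indexed by a set that does not depend on $V$, the function $m$ is upper semicontinuous, so $\{V\mid m(V)<1/n\}$ is open for every $n$ and
\[S=\{V\mid m(V)=0\}=\bigcap_{n\ge1}\{V\mid m(V)<1/n\}\]
is a $G_\delta$ set (note that $S$ need not be closed, since $m$ is not lower semicontinuous, as potentials altered only near infinity show).

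For density, given $V\in X_a^\nu$ I would take $V_R:=V\,\chi_{B(0,R)}$, which again lies in $X_a^\nu$, agrees with $V$ on $B(0,R)$, and therefore satisfies $d(H_{V_R},H_V)\le\sum_{j>R}2^{-j}=2^{-R}\to0$; moreover $m(V_R)=0$, because unit vectors $u_n$ supported in ${\mathbb{R}}^\nu\setminus B(0,R)$ with $\|\nabla u_n\|_{{\mathrm L}^2}\to0$ (a fixed bump dilated by $n$ and translated out of $B(0,R)$) give $\|\nabla u_n\|_{{\mathrm L}^2}^2-\int V_R|u_n|^2=\|\nabla u_n\|_{{\mathrm L}^2}^2\to0$; equivalently, $\sigma_{\mathrm{ess}}(H_{V_R})=\sigma_{\mathrm{ess}}(\triangle)=(-\infty,0]$ by Weyl's theorem, since $V_R$ is a bounded compactly supported potential. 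Hence $S$ is dense and the proof is complete. The step I expect to be the crux is the upper semicontinuity of $V\mapsto\inf\sigma(-H_V)$ — the one-sided persistence of $0$ in the spectrum under the topology of $X_a^\nu$ — which rests exactly on the dominated-convergence argument emphasized in the excerpt; the quadratic-form computation ruling out $0$ as an eigenvalue and the truncation argument for density are then routine.
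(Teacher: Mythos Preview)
Your proposal is correct, but the route differs from the paper's. The paper first establishes an abstract result (Proposition~\ref{mainproposition}) valid in any regular metric space of negative self-adjoint operators: if both $\{A\mid 0\in\sigma(A)\}$ and $\{A\mid 0\notin\sigma(A)\}$ are dense, then the stable-but-not-exponentially-stable set is a dense $G_\delta$. Its proof is dynamical rather than spectral: the exponentially stable set $E$ is shown to be $F_\sigma$ via lower semicontinuity of $A\mapsto\sup_{t\ge0}e^{t/n}\|e^{tA}\|$, and the stable set $Y$ is realized as $\bigcap_k Y_{x_k}$ over a countable dense family, hence $G_\delta$; Baire's theorem is invoked to get density of $Y$ from density of $\{0\notin\sigma(A)\}$. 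The two density hypotheses are then verified for $X_a^\nu$, the second by the perturbation $V_l=\frac{l}{l+1}V-\frac{a}{l+1}$.

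You instead exploit structure specific to $X_a^\nu$: the quadratic-form inequality $0=\langle H_Vu,u\rangle\le-\|\nabla u\|^2$ rules out $0$ as an eigenvalue for every $V$, so strong stability is automatic and the target set collapses to $S=\{0\in\sigma(H_V)\}$; then the variational characterization of $\inf\sigma(-H_V)$ makes $V\mapsto m(V)$ upper semicontinuous, yielding the $G_\delta$ property directly. Your density step (truncation $V_R=V\chi_{B(0,R)}$ and Weyl's theorem) coincides with the paper's. The gain is economy --- no Baire argument, no need to prove density of $\{0\notin\sigma(H_V)\}$ --- while the paper's detour through Proposition~\ref{mainproposition} buys a reusable template for regular spaces in which stability is \emph{not} automatic.
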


It is natural to investigate the asymptotic behaviour of the orbits of the semigroups generated by the operators in the typical set in Theorem~\ref{maintheoremcategory}. The next result, which is a particular case of Theorem~1.2  in~\cite{Aloisio}, says something in this direction.

\begin{theorem}[Theorem 1.2 in \cite{Aloisio}]\label{ACOtheorem}Let~$A$ be a negative self-adjoint operator in~$\mathcal H$  and  $\alpha,\beta:{\mathbb{R}}_+ \longrightarrow (0,\infty)$ functions so that, for each $\epsilon>0$,
\[\lim_{t \to \infty} \alpha(t) = \infty \;\; and \;\; \lim_{t \to \infty} \beta(t)e^{-t \epsilon }= 0.\]
 Suppose that $(e^{tA})_{t\geq 0}$ is stable but is not exponential stable. Then, 
\[{\mathcal{G}}_A(\alpha,\beta) :=\{x \displaystyle\mid\limsup_{t \to \infty} \alpha(t) \Vert e^{tA}x\Vert_{\mathcal{H}} = \infty \;\; and \;\; \liminf_{t \to \infty} \beta(t) \Vert e^{tA}x\Vert_{\mathcal{H}} = 0\}\]
is a dense $G_\delta$ set in $\mathcal{H}$. Moreover, the assumption on $\beta$ is optimal, that is, $\beta$ cannot grow faster than sub-exponentially. 
\end{theorem}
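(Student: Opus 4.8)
The plan is to use Corollary~\ref{prusscor} together with the Spectral Theorem to convert the hypotheses into a single spectral fact, then to exhibit $\mathcal{G}_A(\alpha,\beta)$ as an intersection of two dense $G_\delta$ sets and finish with the Baire category theorem. Since $A$ is negative self-adjoint and $(e^{tA})_{t\ge0}$ is not exponentially stable, Corollary~\ref{prusscor} gives $0\in\sigma(A)$; since it is stable, dominated convergence applied to $\|e^{tA}x\|_{\mathcal H}^2=\int_{\sigma(A)}e^{2t\lambda}\,d\mu_x^A(\lambda)$ forces $\|E^A(\{0\})x\|_{\mathcal H}=0$ for all $x$, i.e.\ $0$ is not an eigenvalue. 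These combine into the key tool: $E^A((-\eta,0))\neq0$ for every $\eta>0$, so there are unit vectors whose spectral measure sits in an arbitrarily small left-neighbourhood of $0$ and whose orbits thus decay arbitrarily slowly. I write $\mathcal{G}_A(\alpha,\beta)=\mathcal G_A^{\sup}(\alpha)\cap\mathcal G_A^{\inf}(\beta)$, with $\mathcal G_A^{\sup}(\alpha):=\{x:\limsup_t\alpha(t)\|e^{tA}x\|_{\mathcal H}=\infty\}$ and $\mathcal G_A^{\inf}(\beta):=\{x:\liminf_t\beta(t)\|e^{tA}x\|_{\mathcal H}=0\}$, and aim to prove each factor is a dense $G_\delta$ set.

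The $G_\delta$ structure is routine. Using that $x\mapsto\|e^{tA}x\|_{\mathcal H}$ is continuous for each fixed $t$, I would write
\[\mathcal G_A^{\sup}(\alpha)=\bigcap_{M\in\mathbb N}\bigcap_{T\in\mathbb N}\;\bigcup_{t\ge T}\bigl\{x\in\mathcal H:\alpha(t)\|e^{tA}x\|_{\mathcal H}>M\bigr\},\]
and, symmetrically,
\[\mathcal G_A^{\inf}(\beta)=\bigcap_{k\in\mathbb N}\bigcap_{T\in\mathbb N}\;\bigcup_{t\ge T}\bigl\{x\in\mathcal H:\beta(t)\|e^{tA}x\|_{\mathcal H}<1/k\bigr\}.\]
Each inner set is open, each union over $t\ge T$ is open, and the outer intersections are countable, so both factors are $G_\delta$; the entire content lies in their density.

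For $\mathcal G_A^{\inf}(\beta)$ density is cheap. The set $\mathcal D:=\bigcup_{c>0}\rng E^A((-\infty,-c])$ is dense, since $E^A((-\infty,-c])\to E^A((-\infty,0))=I$ strongly as $c\downarrow0$ (again using $E^A(\{0\})=0$). For $x_0\in\mathcal D$ one has $\|e^{tA}x_0\|_{\mathcal H}\le e^{-ct}\|x_0\|_{\mathcal H}$ for some $c>0$, whence $\beta(t)\|e^{tA}x_0\|_{\mathcal H}\le\|x_0\|_{\mathcal H}\,\beta(t)e^{-ct}\to0$ by the hypothesis on $\beta$ with $\epsilon=c$; thus $\mathcal D\subset\mathcal G_A^{\inf}(\beta)$, and every dense-open factor above contains $\mathcal D$. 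For $\mathcal G_A^{\sup}(\alpha)$---the crux---I would fix $x_0$, $\epsilon>0$, $M$, $T$, set $\delta=\epsilon/2$, and perturb $x_0$ to $x_0+\delta y_\eta$, where $y_\eta$ is a unit vector in $\rng E^A((-\eta,0))$ (available by the key tool). At the time $t=1/\eta$ one has $\|e^{tA}y_\eta\|_{\mathcal H}\ge e^{-t\eta}=e^{-1}$, while $\|e^{tA}x_0\|_{\mathcal H}\to0$ by stability as $\eta\downarrow0$; choosing $\eta$ small enough that $1/\eta\ge T$, that $\|e^{(1/\eta)A}x_0\|_{\mathcal H}\le\delta/(2e)$, and that $\alpha(1/\eta)\ge4eM/\delta$ (possible since $\alpha\to\infty$) yields $\alpha(t)\|e^{tA}(x_0+\delta y_\eta)\|_{\mathcal H}\ge\alpha(1/\eta)\,\delta/(2e)>M$ with $\|\delta y_\eta\|_{\mathcal H}<\epsilon$. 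Hence each factor of $\mathcal G_A^{\sup}(\alpha)$ is dense.

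Applying the Baire category theorem to each countable family of dense open sets shows $\mathcal G_A^{\sup}(\alpha)$ and $\mathcal G_A^{\inf}(\beta)$ are dense $G_\delta$, and a further application to their combined family yields that $\mathcal{G}_A(\alpha,\beta)$ is dense $G_\delta$. For optimality I would record the spectral dichotomy $\tfrac1t\log\|e^{tA}x\|_{\mathcal H}\to\sup(\mathrm{supp}\,\mu_x^A)$, which is strictly negative when $0\notin\mathrm{supp}\,\mu_x^A$ and equals $0$ (sub-exponential decay) when $0\in\mathrm{supp}\,\mu_x^A$. If $\beta(t)e^{-t\epsilon_0}\not\to0$ for some $\epsilon_0>0$, then for any $\epsilon_1\in(0,\epsilon_0)$ every $x$ with $E^A((-\epsilon_1,0))x\neq0$ satisfies $\beta(t)\|e^{tA}x\|_{\mathcal H}\to\infty$, so $\mathcal G_A^{\inf}(\beta)\subseteq\rng E^A((-\infty,-\epsilon_1])$, a proper closed subspace because $E^A((-\epsilon_1,0))\neq0$; thus $\mathcal G_A^{\inf}(\beta)$, and a fortiori $\mathcal{G}_A(\alpha,\beta)$, is not dense, so the restriction on $\beta$ cannot be relaxed. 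The main obstacle is exactly the density of $\mathcal G_A^{\sup}(\alpha)$: everything hinges on turning ``$0\in\sigma(A)$ is not an eigenvalue'' into spectral subspaces arbitrarily close to $0$, and on the scaling choice $t=1/\eta$, which lets $\alpha(t)$ become large while the slowly decaying perturbation remains of unit order---this is precisely where an arbitrary, possibly fast-growing, $\alpha$ is accommodated.
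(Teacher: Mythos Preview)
The paper does not contain a proof of this theorem: it is quoted verbatim as ``Theorem~1.2 in~\cite{Aloisio}'' and used as a black box (see the sentence preceding the statement and Remark~\ref{remarkmaintheorem}). There is therefore nothing in the present paper to compare your argument against.

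That said, your argument for the dense $G_\delta$ assertion is sound. The reduction to the spectral fact ``$0\in\sigma(A)$ but $E^A(\{0\})=0$'' is exactly right, the $G_\delta$ decompositions are correct (the uncountable unions over $t\ge T$ are unions of open sets, hence open), the density of $\mathcal G_A^{\inf}(\beta)$ via $\mathcal D=\bigcup_{c>0}\rng E^A((-\infty,-c])$ is clean, and the perturbation argument for $\mathcal G_A^{\sup}(\alpha)$ with $y_\eta\in\rng E^A((-\eta,0))$ and the time choice $t=1/\eta$ is the natural one and is carried out correctly.

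One genuine slip occurs in the optimality paragraph. From ``$\beta(t)e^{-t\epsilon_0}\not\to 0$'' you cannot conclude that $\beta(t)\|e^{tA}x\|_{\mathcal H}\to\infty$ for every $x$ with $E^A((-\epsilon_1,0))x\neq 0$: the hypothesis only gives a subsequence along which $\beta(t_n)e^{-t_n\epsilon_0}\ge c>0$, and away from that subsequence $\beta$ could be bounded, in which case $\liminf_t\beta(t)\|e^{tA}x\|_{\mathcal H}$ may well be $0$. To make the optimality precise you should either (i) strengthen the negated hypothesis to $\liminf_{t\to\infty}\beta(t)e^{-t\epsilon_0}>0$ (which is the natural reading of ``$\beta$ grows faster than sub-exponentially''), after which your inclusion $\mathcal G_A^{\inf}(\beta)\subset\rng E^A((-\infty,-\epsilon_1])$ does follow from $\|e^{tA}x\|\ge e^{-t\epsilon_1}\|E^A((-\epsilon_1,0))x\|$; or (ii) exhibit a concrete $\beta$ (e.g.\ $\beta(t)=e^{t\epsilon_0}$) and a concrete $A$ for which the conclusion fails. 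Either route repairs the argument with no new ideas.
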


\begin{remark}\label{remarkmaintheorem}{\rm  Theorem \ref{ACOtheorem} can be seen as an expression of the fact that there exist dense $G_\delta$ sets of initial values $x \in {\cal{H}}$ such that the orbit $(e^{tN}x)_{t \geq 0}$ contains a sequence that decays to zero no faster than a fixed but arbitrarily slow rate, and a sequence that decays to zero at a sub-exponential rate. To put this work into perspective, we note that Theorem \ref{maintheoremcategory} makes clear that the assumptions in the statement of Theorem \ref{ACOtheorem} are very natural.}
\end{remark}

\begin{remark}{\rm It follows from Theorems \ref{maintheoremcategory} and \ref{ACOtheorem}  that, for each $a>0$ and each $\nu\in\mathbb{N}$, typically in $X_a^\nu$, the orbits of each Schr\"odinger semigroup $(e^{tH_V})_{t\geq 0}$, typically in ${\mathrm L}^2({\mathbb{R}}^\nu)$, have decaying rates depending on subsequences of time going to infinity. Hence, for every $X_a^\nu$, the dynamics is typically (from the topological viewpoint) nontrivial.}
\end{remark}

\noindent In this work, we are also particularly interested in using Theorem \ref{maintheoremcategory} to say something about the
spectral properties of this class of Schr\"odinger semigroups. 

\begin{definition}{\rm  Let $\mu$ be a finite (positive) Borel measure on $\mathbb{R}$. The pointwise lower and upper scaling exponents of $\mu$  at $w \in \mathbb{R}$ are defined, respectively, as  
\[d_\mu^-(w) := \liminf_{\epsilon \downarrow 0} \frac{\ln \mu (B(w,\epsilon))}{\ln \epsilon} \quad{\rm  and }\quad d_\mu^+(w) := \limsup_{\epsilon \downarrow 0} \frac{\ln \mu (B(w,\epsilon))}{\ln \epsilon},\]
if, for all $\epsilon>0$,  $\mu(B(w,\epsilon))> 0$; if not, $d_\mu^{\mp}(w) := \infty$; here, $B(w,\epsilon)$ stands for the open ball of radius $\epsilon$ centered at $w$.} 
\end{definition}

\begin{proposition}[Proposition in 2.1 in \cite{Aloisio}]\label{decpolproposition}  Let $A$ be a negative self-adjoint operator. Then, for every $x \in \cal{H}$, $x \not = 0$,
\begin{equation*}
\liminf_{t \to \infty} \frac{\ln  \|e^{tA}x\|_{{\mathcal{H}}}^2}{\ln t} = -d_{\mu_x^A}^+(0),
\end{equation*}
\begin{equation*}
\limsup_{t \to \infty} \frac{\ln \|e^{tA}x\|_{{\mathcal{H}}}^2}{\ln  t} = -d_{\mu_x^A}^-(0).
\end{equation*}
\end{proposition}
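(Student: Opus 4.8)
The plan is to translate the asymptotics of $\|e^{tA}x\|_{\mathcal H}^2$ into the asymptotics of the Laplace-type transform $\int_{[0,\infty)} e^{-2ts}\,d\nu(s)$, where $\nu$ is the pushforward of $\mu_x^A$ under $\lambda\mapsto-\lambda$ (so $\nu$ is supported on $[0,\infty)\subset-\sigma(A)$), and then relate the behaviour of that transform as $t\to\infty$ to the behaviour of $\nu(B(0,\epsilon))=\mu_x^A(B(0,\epsilon))$ as $\epsilon\downarrow0$. Concretely, by the Spectral Theorem, $\|e^{tA}x\|_{\mathcal H}^2=\int_{\sigma(A)}e^{2t\lambda}\,d\mu_x^A(\lambda)=\int_{[0,\infty)}e^{-2ts}\,d\nu(s)=:F(t)$. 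I would first dispose of the trivial case: if $0$ is not in the topological support of $\mu_x^A$ (equivalently, $\mu_x^A(B(0,\epsilon))=0$ for some $\epsilon>0$), then $d_{\mu_x^A}^\pm(0)=\infty$ and $F(t)$ decays exponentially, so both $\ln F(t)/\ln t\to-\infty$, matching the right-hand sides; hence assume henceforth that $\nu(B(0,\epsilon))>0$ for every $\epsilon>0$.

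Next I would set up the two inequalities that pin down each limit. For the lower bound on $F(t)$, restrict the integral to $[0,\epsilon]$: $F(t)\ge e^{-2t\epsilon}\nu(B(0,\epsilon))$, and optimizing the choice $\epsilon=\epsilon(t)\to0$ (for instance $\epsilon\sim c/t$) yields, after taking $\ln(\cdot)/\ln t$, a bound governed by $\liminf/\limsup_{\epsilon\downarrow0}\ln\nu(B(0,\epsilon))/\ln\epsilon$. For the upper bound on $F(t)$, split $[0,\infty)=[0,\epsilon]\cup(\epsilon,\infty)$: on the first piece $e^{-2ts}\le1$ so the contribution is at most $\nu(B(0,\epsilon))$; on the second piece I would integrate by parts (or dyadically decompose $(\epsilon,\infty)$ into shells $(2^k\epsilon,2^{k+1}\epsilon]$) to write the tail as a weighted sum/integral of $\nu(B(0,r))e^{-2tr}$ and absorb it, again choosing $\epsilon=\epsilon(t)\to0$ appropriately. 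Combining the two directions and being careful with the $\liminf$/$\limsup$ (the upper scaling exponent controls the slow-decay, i.e.\ $\liminf_t$, behaviour, and the lower exponent controls $\limsup_t$) produces the two stated identities.

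The main obstacle is the bookkeeping in the upper bound: one must show that the tail integral over $(\epsilon(t),\infty)$ does not dominate the $\nu(B(0,\epsilon(t)))$ term and does not spoil the logarithmic asymptotics, uniformly enough to pass to $\liminf$/$\limsup$ in $t$. This is handled by the layer-cake representation $F(t)=\int_0^\infty 2t\,e^{-2tr}\,\nu(B(0,r))\,dr$ (valid since $\nu(B(0,r))=\nu(\{s\le r\})$ up to boundary points, which have measure zero for a.e.\ $r$), after which the estimates $\nu(B(0,r))\le r^{d_{\mu_x^A}^-(0)-\delta}$ for small $r$ and $\nu(B(0,r))\le\nu(\mathbb R)$ for all $r$, together with standard Laplace-transform asymptotics of $r\mapsto r^\gamma$, give matching upper bounds; the reverse bounds $\nu(B(0,r))\ge r^{d_{\mu_x^A}^-(0)+\delta}$ along a sequence $r_n\downarrow0$ feed the lower-bound estimate above. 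Since $\delta>0$ is arbitrary, the two one-sided limits coincide with $-d_{\mu_x^A}^+(0)$ and $-d_{\mu_x^A}^-(0)$ respectively, completing the proof.
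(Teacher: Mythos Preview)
The paper does not supply its own proof of this proposition; it is simply quoted from the companion article~\cite{Aloisio} (``Proposition~2.1 in~\cite{Aloisio}''), so there is nothing in the present paper to compare your argument against.

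That said, your outline is the standard Abelian/Tauberian approach and it is correct. A few small points are worth tightening. First, the simple two--piece split already suffices for the upper bound: from
\[
F(t)\;\le\;\nu\bigl([0,\epsilon)\bigr)\;+\;e^{-2t\epsilon}\,\Vert x\Vert_{\mathcal H}^2,
\]
the choice $\epsilon(t)=K(\ln t)/t$ with $K$ large (not $\epsilon\sim c/t$, which leaves the tail as a nondecaying constant) makes the exponential term $t^{-2K}$ and then the $\nu$--term governs the logarithmic asymptotics. Second, there is an asymmetry between the two identities that your last paragraph slightly obscures: the layer--cake formula, together with $\nu(B(0,r))\le r^{\,d^-_{\mu_x^A}(0)-\delta}$ for \emph{all} small~$r$, gives the upper bound for $\limsup_t$; but for $\liminf_t$ one only has $\nu(B(0,\epsilon_n))\le \epsilon_n^{\,d^+_{\mu_x^A}(0)-\delta}$ \emph{along a sequence}, and the layer--cake integral cannot exploit information on a sequence of radii---here one must use the two--piece split above with $\epsilon=\epsilon_n$ and a matched $t_n\sim K\epsilon_n^{-1}\ln(1/\epsilon_n)$. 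The lower bounds in both cases follow, as you say, from $F(t)\ge e^{-2}\,\nu(B(0,1/t))$ directly. With these adjustments your proposal goes through, including the boundary cases $d^{\pm}_{\mu_x^A}(0)=\infty$.
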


Proposition \ref{decpolproposition} relates, for each $x\in\mathcal{H}$, the polynomial decaying rates of $\|e^{tA}x\|_{{\mathcal{H}}}$ to dimensional properties of  the spectral measure $\mu_x^A$ of~$A$ associated with $x$. We note that this result establishes an explicit relation between the dynamics of the semigroup and the spectral properties of its generator. 

The next result is a direct application of Proposition \ref{decpolproposition} and Theorem \ref{maintheoremcategory}.

\begin{theorem}\label{spectraltheorem} For each $a>0$ and each $\nu\in\mathbb{N}$, there exists a dense $G_\delta$ set~$G_a^\nu$ in ${\mathrm L}^2({\mathbb{R}}^\nu)$, such that, for every $f \in G_a^\nu$,  
\[J_a^\nu(f) := \{H \in X_a^\nu  \mid d_{\mu_f^H}^-(0) = 0  \ and \ d_{\mu_f^H}^+(0) = \infty\}\]
is a dense $G_\delta$ set in $X_a^\nu$, where $\mu_f^H$ represents the spectral measure of~$H$ associated with $f$.
\end{theorem}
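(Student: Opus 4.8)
The plan is to combine Theorem~\ref{maintheoremcategory} with Proposition~\ref{decpolproposition} via a standard double-Baire-category argument on the product space ${\mathrm L}^2({\mathbb{R}}^\nu)\times X_a^\nu$. First I would observe that, by Proposition~\ref{decpolproposition}, for a fixed nonzero $f$ and a fixed $H$ the pair of scaling-exponent conditions $d_{\mu_f^H}^-(0)=0$ and $d_{\mu_f^H}^+(0)=\infty$ is exactly equivalent to $\limsup_{t\to\infty}\frac{\ln\|e^{tH}f\|^2}{\ln t}=0$ and $\liminf_{t\to\infty}\frac{\ln\|e^{tH}f\|^2}{\ln t}=-\infty$; these say that the orbit $(e^{tH}f)_{t\ge0}$ decays sub-polynomially along one subsequence of times and faster than any polynomial along another. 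So the set $J_a^\nu(f)$ can be rewritten purely in terms of the dynamics of the semigroup acting on the single vector $f$, which is the form in which Baire-category arguments are convenient.

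Next I would set up the ambient space $Y:={\mathrm L}^2({\mathbb{R}}^\nu)\times X_a^\nu$ with the product metric; since ${\mathrm L}^2$ is a complete separable metric space and $X_a^\nu$ is compact metric (hence Polish), $Y$ is Polish and the Baire category theorem applies. The goal is to show that
\[
\mathcal{S}:=\{(f,H)\in Y \mid f\neq 0,\ d_{\mu_f^H}^-(0)=0,\ d_{\mu_f^H}^+(0)=\infty\}
\]
is residual in $Y$, i.e.\ contains a dense $G_\delta$. Using the rewriting above, $\mathcal{S}$ is a countable intersection over $n\in\mathbb{N}$ of sets of the form
\[
U_n:=\Bigl\{(f,H) : \exists\, t>n \text{ with } \|e^{tH}f\|^2 > t^{-1/n}\Bigr\}\ \cap\ \Bigl\{(f,H) : \exists\, s>n \text{ with } \|e^{sH}f\|^2 < s^{-n}\Bigr\}
\]
(intersected with $\{f\neq 0\}$, itself a dense open set). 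The map $(f,H)\mapsto \|e^{tH}f\|^2$ is continuous on $Y$ for fixed $t$ — this uses that convergence in $X_a^\nu$ implies strong resolvent convergence (the computation in the excerpt), hence $e^{tH_k}\to e^{tH}$ strongly and uniformly on compact $t$-intervals, together with joint continuity in the $f$-variable — so each $U_n$ is open in $Y$. Thus $\mathcal{S}$ is $G_\delta$, and it remains to prove density.

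For density I would proceed in two stages. Stage one: by Theorem~\ref{maintheoremcategory}, the set $\mathcal{D}$ of $H\in X_a^\nu$ for which $(e^{tH})_{t\ge0}$ is stable but not exponentially stable is a dense $G_\delta$ in $X_a^\nu$; for each such $H$, Theorem~\ref{ACOtheorem} (applied with suitable $\alpha,\beta$, or more directly the companion statement giving that $\mathcal{G}_H(\alpha,\beta)$ is dense $G_\delta$ in ${\mathrm L}^2({\mathbb{R}}^\nu)$) produces a dense $G_\delta$ set of vectors $f$ whose orbit decays slower than any prescribed sub-exponential rate along one subsequence and faster along another — in particular a dense $G_\delta$ set of $f$ with $(f,H)\in\mathcal S$. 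Actually the cleaner route is: fix a countable dense set $\{H_j\}\subset \mathcal D$ (using separability of $X_a^\nu$ and density of $\mathcal D$); for each $j$ let $G_j\subset{\mathrm L}^2({\mathbb{R}}^\nu)$ be the dense $G_\delta$ of vectors realizing the two scaling exponents for $H_j$; then $\{H_j\}\times G_j\subset\mathcal S$, and $\bigcup_j \{H_j\}\times G_j$ is dense in $Y$ because any basic open box $W\times V\subset Y$ contains some $H_j\in V$ (density of $\mathcal D$) and then meets $\{H_j\}\times G_j$ (density of $G_j$ in $W$). Hence $\mathcal S$ is dense; being also $G_\delta$, it is residual in $Y$. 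Stage two: apply the Kuratowski–Ulam theorem (the Baire-category Fubini) to the residual set $\mathcal S\subset {\mathrm L}^2\times X_a^\nu$: it yields a residual set $G_a^\nu\subset {\mathrm L}^2({\mathbb{R}}^\nu)$ such that for every $f\in G_a^\nu$ the section $\mathcal S_f=\{H : (f,H)\in\mathcal S\}=J_a^\nu(f)$ is residual — i.e.\ contains a dense $G_\delta$ — in $X_a^\nu$. Since $X_a^\nu$ is compact metric, a residual set contains a dense $G_\delta$; and one checks directly (the $U_n$ above have open sections) that $J_a^\nu(f)$ is itself $G_\delta$, so it is a dense $G_\delta$ in $X_a^\nu$, and we may shrink $G_a^\nu$ to a dense $G_\delta$. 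This gives the statement.

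The main obstacle I anticipate is the joint-continuity claim underpinning openness of the $U_n$, i.e.\ that $(f,H)\mapsto\|e^{tH}f\|^2_{{\mathrm L}^2}$ is continuous on ${\mathrm L}^2\times X_a^\nu$ for each fixed $t$: one must upgrade the strong-resolvent convergence recorded in the excerpt to strong convergence of the semigroups $e^{tH_k}\to e^{tH}$ (via the Trotter–Kato approximation theorem, legitimate since all $H_k,H$ generate contraction semigroups and share the common core), and then combine $e^{tH_k}f_k - e^{tH}f = e^{tH_k}(f_k-f) + (e^{tH_k}-e^{tH})f$ with the uniform bound $\|e^{tH_k}\|\le 1$. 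A secondary point requiring care is the precise form of the Kuratowski–Ulam application: one needs $\mathcal S$ to have the Baire property in the product (automatic, since it is $G_\delta$) and the two factors to be second countable, both of which hold; after that the theorem is entirely standard. Everything else — rewriting $J_a^\nu(f)$ via Proposition~\ref{decpolproposition}, the countable-intersection description, and the density bookkeeping — is routine.
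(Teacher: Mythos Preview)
Your argument is correct and rests on the same core ingredients as the paper's proof: the rewriting of the scaling-exponent conditions via Proposition~\ref{decpolproposition}, the continuity of $(f,H)\mapsto\|e^{tH}f\|$ coming from strong-resolvent/strong-dynamical convergence, and---crucially---the choice of a countable dense family $\{H_k\}$ inside the dense $G_\delta$ set $C_a^\nu$ furnished by Theorem~\ref{maintheoremcategory}, together with, for each $H_k$, a dense $G_\delta$ set $G_k\subset{\mathrm L}^2({\mathbb{R}}^\nu)$ of vectors realizing the extreme exponents (via Theorem~\ref{ACOtheorem}/Corollary~\ref{cortheoremnormal}).

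The difference lies only in how these pieces are assembled. You pass to the product space ${\mathrm L}^2({\mathbb{R}}^\nu)\times X_a^\nu$, show that the joint set $\mathcal S$ is a dense $G_\delta$ there, and then invoke the Kuratowski--Ulam theorem to extract a residual set of $f$'s with residual sections. The paper skips the product space entirely: it simply sets $G_a^\nu:=\bigcap_k G_k$ (a dense $G_\delta$ in ${\mathrm L}^2({\mathbb{R}}^\nu)$ by Baire) and observes that for any $f\in G_a^\nu$ one has $J_a^\nu(f)\supset\{H_k\}$, which is dense in $X_a^\nu$; since $J_a^\nu(f)$ is already seen to be $G_\delta$, it is a dense $G_\delta$. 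Your route is more systematic and would adapt to situations where the fiberwise dense $G_\delta$'s are not given so explicitly, but here it is heavier than necessary: the explicit intersection $\bigcap_k G_k$ does the job without Kuratowski--Ulam and yields $G_a^\nu$ directly as a dense $G_\delta$ rather than merely residual.
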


\begin{remark}{\rm  Theorem \ref{spectraltheorem} indicates the subtlety of the relations between the dynamics of a Schr\"{o}dinger semigroup and the spectral properties of its generator; that is, for each initial condition~$f \in {\mathrm L}^2({\mathbb{R}}^\nu)$ in a dense $G_\delta$ set, the generic situation in $X_a^\nu$ is characterized by extreme values of the scaling exponents of spectral measures (i.e., $d_{\mu_f^H}^-(0) = 0, d_{\mu_f^H}^+(0) = \infty$) and the corresponding dynamical consequences described in Proposition~\ref{decpolproposition}.}
\end{remark}

\begin{remark}{\rm Although important, it is not always easy to present a typical spectral property for a family of Schr\"{o}dinger operators. Particularly for $X_a^\nu$, a difficulty that may arise is that the spectrum of the operators are Baire typically purely singular continuous (this has been proven in \cite{SimonAnn}). In this context, in order to get around this, we use in the proof of Theorem \ref{spectraltheorem} an argument involving separability that, in a sense, connects the typical behaviour of ${\mathrm L}^2({\mathbb{R}}^\nu)$ with the typical behaviour in $X_a^\nu$.}
\end{remark}

\ 

The paper is organized as follows. In Section \ref{secproofmaintheorem}, we present the proof of Theorem \ref{maintheoremcategory}. In Section \ref{secapplic}, we prove Theorem \ref{spectraltheorem}

\

Some words about notation: $\mathcal{H}$ denotes a separable complex Hilbert space and $\mathcal{B}(\mathcal{H})$  the space of all bounded linear operators on~$\mathcal{H}$. If~$A$ is a linear operator in~$\mathcal{H}$, we denote its domain by ${\mathcal{D}}(A)$ and its range by $\rng A$. 
For each set $\Omega \subset {\mathbb{R}}^\nu$, $\nu \in \mathbb{N}$, $\chi_\Omega$ denotes the characteristic function of the set $\Omega$. For each $x \in {\mathbb{R}}^\nu$ and $r>0$,  $B(x,r)$ denotes the open ball (with respect to the Euclidean distance) of radius~$r$  centered  on~$x$. Finally, $\| \cdot \|_{\mathcal{B}(\mathcal{H})}$ denotes the norm in $\mathcal{B}(\mathcal{H})$ and $\| \cdot \|_{\mathcal{H}}$ denotes the norm in~$\mathcal{H}$.


\section{Proof of Theorem \ref{maintheoremcategory}}\label{secproofmaintheorem}

\noindent Some preparation is required in order to present the proof of Theorem \ref{maintheoremcategory}. 

\begin{definition}{\rm A sequence of bounded linear operators $(A_n)$ strongly converges to $A$ on~$\cal{H}$ if, for every $x \in \cal{H}$, $A_n x \longrightarrow Ax$ in $\cal{H}$.}
\end{definition}

Now we present precise definitions of a sequence of (unbounded) negative self-adjoint operators~$(A_n)$ approaching another one~$A$.

\begin{definition}\label{convergence}{\rm Let $(A_n)$ be a sequence of negative self-adjoint operators, that is, $A_n \leq 0$, and let $A$ be another negative self-adjoint operator. One says that: 
\begin{enumerate}
\item [i)] $A_n$ converges to $A$,  as $n \rightarrow \infty$, in the strong resolvent sense (SR) if $R(i,A_n)$  strongly converges to $R(i,A)$.
\item [ii)] $A_n$ converges to $A$, as $n \rightarrow \infty$, in the strong dynamical sense (SD) if, for each $t \geq 0$, $e^{tA_n}$ strongly converges to $e^{tA}$.
\end{enumerate}}
\end{definition}

\begin{proposition}[Proposition 10.1.9 in \cite{Oliveira}]\label{SR-SDtheorem} The SR convergence of negative self-adjoint operators implies in the SD convergence.
\end{proposition}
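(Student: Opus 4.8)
The plan is to reduce the statement to a functional-calculus fact and then conclude by Stone--Weierstrass. Since each $A_n$ and $A$ is negative self-adjoint, their spectra lie in $(-\infty,0]$; hence, by the (bounded Borel) functional calculus, for every $t>0$ one has $e^{tA_n}=\psi_t(A_n)$ and $e^{tA}=\psi_t(A)$, where $\psi_t(\lambda):=e^{-t|\lambda|}$ agrees with $\lambda\mapsto e^{t\lambda}$ on $(-\infty,0]$ and belongs to $C_0(\mathbb{R})$, the space of continuous functions vanishing at $\pm\infty$ (the case $t=0$ being trivial). Thus it suffices to show that SR convergence implies $\phi(A_n)\to\phi(A)$ strongly for every $\phi\in C_0(\mathbb{R})$.

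To that end, I would consider the set $\mathcal{A}$ of all $\phi\in C_0(\mathbb{R})$ such that $\phi(A_n)x\to\phi(A)x$ in $\mathcal{H}$ for every $x\in\mathcal{H}$, and check, using the uniform bound $\|\phi(A_n)\|_{\mathcal{B}(\mathcal{H})}\le\|\phi\|_\infty$, that $\mathcal{A}$ is a subalgebra of $C_0(\mathbb{R})$ closed in the supremum norm. Both verifications are routine $3\varepsilon$-type arguments, the algebra property following from the identity $\phi(A_n)\psi(A_n)-\phi(A)\psi(A)=\phi(A_n)[\psi(A_n)-\psi(A)]+[\phi(A_n)-\phi(A)]\psi(A)$. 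By hypothesis $r\in\mathcal{A}$, where $r(\lambda):=(i-\lambda)^{-1}$.

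The key step — and the only one I expect to be nonroutine — is to show that also $\bar r\in\mathcal{A}$, that is, that $R(-i,A_n)\to R(-i,A)$ strongly; this does not follow formally, because $R(-i,A_n)=R(i,A_n)^{*}$ and strong convergence need not pass to adjoints. To handle it, fix $x\in\mathcal{H}$. On the one hand, $\langle R(-i,A_n)x,y\rangle=\langle x,R(i,A_n)y\rangle\to\langle x,R(i,A)y\rangle=\langle R(-i,A)x,y\rangle$ for every $y\in\mathcal{H}$, so $R(-i,A_n)x\to R(-i,A)x$ weakly. On the other hand, the resolvent identity gives $R(i,A_n)R(-i,A_n)=\tfrac{i}{2}\bigl(R(i,A_n)-R(-i,A_n)\bigr)$, whence
\[\|R(-i,A_n)x\|_{\mathcal{H}}^{2}=\langle R(i,A_n)R(-i,A_n)x,x\rangle=\tfrac{i}{2}\bigl(\langle R(i,A_n)x,x\rangle-\langle R(-i,A_n)x,x\rangle\bigr)\longrightarrow \|R(-i,A)x\|_{\mathcal{H}}^{2},\]
using the convergences just established (the first inner product converges because $R(i,A_n)x\to R(i,A)x$ strongly, the second because of the weak convergence with $y=x$). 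Since weak convergence together with convergence of norms implies strong convergence in a Hilbert space, $R(-i,A_n)x\to R(-i,A)x$, i.e.\ $\bar r\in\mathcal{A}$.

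Finally, $\mathcal{A}$ is a closed subalgebra of $C_0(\mathbb{R})$ containing $r$ and $\bar r$; since $r$ is injective on $\mathbb{R}$ and vanishes nowhere, the subalgebra generated by $\{r,\bar r\}$ separates points, vanishes nowhere, and is closed under complex conjugation, so by the Stone--Weierstrass theorem (locally compact version) it is dense in $C_0(\mathbb{R})$; as $\mathcal{A}$ is closed, $\mathcal{A}=C_0(\mathbb{R})$. Applying this to $\psi_t$ gives $e^{tA_n}=\psi_t(A_n)\to\psi_t(A)=e^{tA}$ strongly for each $t\ge 0$, which is exactly SD convergence. Everything outside the passage from $R(i,\cdot)$ to $R(-i,\cdot)$ in the third paragraph is standard.
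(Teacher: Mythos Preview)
The paper does not supply its own proof of this proposition; it simply cites Proposition~10.1.9 in de~Oliveira's monograph. Your argument is correct and is essentially the standard textbook proof: pass from the single resolvent point $z=i$ to $z=-i$ (so that the closed $*$-subalgebra $\mathcal{A}\subset C_0(\mathbb{R})$ contains both $r$ and $\bar r$), invoke Stone--Weierstrass to get $\mathcal{A}=C_0(\mathbb{R})$, and then specialize to $\psi_t$. The one genuinely nontrivial step --- upgrading the adjoint convergence $R(-i,A_n)=R(i,A_n)^*$ from weak to strong via ``weak convergence $+$ convergence of norms $\Rightarrow$ strong convergence'' together with the resolvent identity --- is handled cleanly; your computation of $\|R(-i,A_n)x\|^2$ from $R(i,A_n)R(-i,A_n)=\tfrac{i}{2}\bigl(R(i,A_n)-R(-i,A_n)\bigr)$ is correct. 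This is the approach one finds, in one form or another, in standard references (e.g.\ Reed--Simon~I, Theorems~VIII.20--VIII.21), and presumably also in the cited source.
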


We recall that a metric space $(X,d)$ of negative self-adjoint operators, acting on~$\mathcal H$, is called {\em regular} \cite{SimonAnn} if it is complete and convergence with respect to~$d$ implies strong resolvent convergence of operators.

\begin{proposition}\label{mainproposition}
Let $(X,d)$ be a regular space of negative self-adjoint operators. Suppose that 
\begin{enumerate}
\item[{\rm i)}] $\{A \in X \mid 0 \in \sigma(A)\}$ is dense in~$X$,
\item[{\rm ii)}] $\{A \in X \mid 0 \not \in \sigma(A)\}$ is dense in~$X$.
\end{enumerate}
Then, 
\[\{A \in X \mid (e^{tA})_{t\geq 0} \text{ is stable but not exponentially stable}\}\]
is a dense $G_\delta$ set in~$X$.
\end{proposition}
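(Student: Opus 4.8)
The plan is to identify the "stable but not exponentially stable" set with a countable intersection of open dense sets, using Corollary \ref{prusscor} to translate dynamical properties into the single spectral condition about whether $0\in\sigma(A)$. First I would observe that, for a negative self-adjoint operator $A$, the semigroup $(e^{tA})_{t\ge 0}$ is stable precisely when $0$ is not an eigenvalue of $A$, i.e.\ when $E^A(\{0\})=0$; indeed $\|e^{tA}x\|^2=\int_{\sigma(A)}e^{2t\lambda}\,d\mu_x^A(\lambda)\to\mu_x^A(\{0\})$ by dominated convergence, so strong stability is equivalent to $\mu_x^A(\{0\})=0$ for all $x$. On the other hand, by Corollary \ref{prusscor}, failure of exponential stability is equivalent to $0\in\sigma(A)$. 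Hence the target set is
\[
S:=\{A\in X\mid 0\in\sigma(A)\ \text{and}\ E^A(\{0\})=0\}.
\]

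Next I would write $S$ as a $G_\delta$ set. For the "$0\in\sigma(A)$" part: the map $A\mapsto \|R(i,A)\|_{\mathcal B(\mathcal H)}$ need not be continuous, but $0\in\sigma(A)$ is equivalent to $\|R(i,A)\|\ge$ arbitrarily... more robustly, since $A\le 0$, one has $0\in\sigma(A)$ iff $\inf\mathrm{spec}(-A)=0$ iff for every $n$ there is a unit vector $x$ with $\langle -Ax,x\rangle<1/n$, i.e.\ iff for every $n$, $\|R(i,A)\|$ is large; using SR convergence (hence $R(i,A_k)\to R(i,A)$ strongly) and lower semicontinuity of the norm under strong convergence, the set $U_n:=\{A: \|R(i,A)\|>n\}$ is open, and $\{A:0\in\sigma(A)\}=\bigcap_n U_n$ is $G_\delta$. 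For the "$E^A(\{0\})=0$" part: one checks that $A\mapsto \langle e^{tA}x,x\rangle$ is continuous (Proposition \ref{SR-SDtheorem}), the limit as $t\to\infty$ is $\langle E^A(\{0\})x,x\rangle$, and this limit function is an infimum over $t$ of continuous functions, hence upper semicontinuous; fixing a countable dense $\{x_m\}\subset\mathcal H$, the set $\{A: E^A(\{0\})=0\}=\bigcap_{m,n}\{A:\langle e^{n A}x_m,x_m\rangle<1/n\cdot\|x_m\|^2+\dots\}$... more cleanly, $\{A:\mu^A_{x_m}(\{0\})=0\}$ is $G_\delta$ and the intersection over $m$ (a countable family, using density of $\{x_m\}$ and the uniform bound $\|e^{tA}\|\le1$) gives $\{A:E^A(\{0\})=0\}$, still $G_\delta$. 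So $S$ is $G_\delta$.

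Then I would prove density of $S$ via the Baire category theorem. Since $X$ is regular, it is complete, hence a Baire space. Each of the open sets $U_n=\{A:\|R(i,A)\|>n\}$ is dense: given $A_0\in X$, hypothesis (i) lets us approximate $A_0$ by some $A$ with $0\in\sigma(A)\subset U_n$. Each of the open sets $V_{m,k}$ whose intersection cuts out $\{E^A(\{0\})=0\}$ is dense: given $A_0$, hypothesis (ii) provides $A$ arbitrarily close with $0\notin\sigma(A)$, whence $E^A(\{0\})=0$, so $A$ lies in every $V_{m,k}$. By Baire, the countable intersection $S$ of all these dense open sets is dense $G_\delta$.

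**Main obstacle.** The delicate point is the measure-theoretic/topological bookkeeping that makes $\{A:E^A(\{0\})=0\}$ genuinely $G_\delta$: the function $A\mapsto \langle E^A(\{0\})x,x\rangle=\lim_{t\to\infty}\langle e^{tA}x,x\rangle$ is only upper semicontinuous, so its zero set is $G_\delta$ for a fixed $x$, and one must argue carefully that intersecting over a countable dense set of $x$'s (rather than all $x$) suffices — this uses the uniform contraction bound $\|e^{tA}\|\le1$ together with the identity $\langle e^{tA}x,x\rangle\to \langle E^A(\{0\})x,x\rangle$ and a density/approximation argument to pass from $\mu^A_{x_m}(\{0\})=0$ for all $m$ to $E^A(\{0\})=0$. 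The rest — lower semicontinuity of the resolvent norm under strong convergence, continuity of $A\mapsto e^{tA}$ from Proposition \ref{SR-SDtheorem}, and the two density statements handed to us by hypotheses (i) and (ii) — is routine.
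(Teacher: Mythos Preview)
Your overall strategy matches the paper's: split the target set into the ``stable'' piece and the ``not exponentially stable'' piece, show each is a dense $G_\delta$, and invoke Baire. The paper handles the latter dynamically rather than spectrally --- it writes the exponentially stable set $E$ as the $F_\sigma$ set $\bigcup_n F_n$ with $F_n=\{A:\sup_{t\ge0}e^{t/n}\|e^{tA}\|_{\mathcal B(\mathcal H)}\le1\}$, each $F_n$ closed because $A\mapsto\sup_t e^{t/n}\|e^{tA}\|$ is a supremum of continuous functions (via Proposition~\ref{SR-SDtheorem}) and hence lower semicontinuous --- and then uses hypothesis~(i) with Corollary~\ref{prusscor} to see that $E^c$ is dense. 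For stability the paper does essentially what you sketch: it shows $Y_x=\{A:\|e^{tA}x\|\to0\}$ is a dense $G_\delta$ for each fixed $x$, then intersects over a countable dense set $\{x_k\}$ (invoking Moore--Osgood, though your projection argument via $\langle E^A(\{0\})x,x\rangle$ works just as well).

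There is, however, one concrete error. For a negative self-adjoint $A$ one has $\sigma(A)\subset(-\infty,0]$ and $\|R(i,A)\|_{\mathcal B(\mathcal H)}=1/\mathrm{dist}(i,\sigma(A))\le 1$, with equality precisely when $0\in\sigma(A)$; the resolvent norm at~$i$ is never ``large'', and your sets $U_n=\{A:\|R(i,A)\|>n\}$ are empty for $n\ge1$, so $\bigcap_n U_n=\emptyset$, not $\{A:0\in\sigma(A)\}$. The fix is immediate: take instead $U_n=\{A:\|R(i,A)\|>1-1/n\}$, which are open by the lower semicontinuity of the norm under strong convergence that you cite, and whose intersection is $\{A:\|R(i,A)\|\ge1\}=\{A:0\in\sigma(A)\}$. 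With this correction your spectral route to the $G_\delta$ structure is valid and arguably a touch cleaner than the paper's dynamical one.
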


\begin{proof} To prove Proposition \ref{mainproposition}, it is enough to show that

\begin{enumerate}
\item [i)] $E := \{A \in X \mid   (e^{tA})_{t\geq 0}$  is exponentially stable$\}$ is meager in~$X$,
\item [ii)] $Y := \{A \in X \mid   (e^{tA})_{t\geq 0}$  is stable$\}$ is a dense $G_\delta$ set in~$X$.
\end{enumerate}

Firstly, let us show that $E$ is an $F_\sigma$ set in~$X$. It follows from Proposition \ref{SR-SDtheorem} that each section of the mapping  
\[{\mathbb{R}}_+ \times {\mathcal{H}} \times X \ni (t,x,A) \mapsto  \Vert e^{tA}x\Vert_{\mathcal{H}}\]
is continuous. Thus, for each $n \geq 1$, the mapping 
\[X \ni A \mapsto \sup_{t \geq 0} \sup_{\Vert x\Vert_{\mathcal{H}} = 1}  e^{\frac{t}{n}}\Vert e^{tA}x\Vert_{\mathcal{H}} = \sup_{t \geq 0} e^{\frac{t}{n}}\Vert e^{tA}\Vert_{{\cal{B}}(\mathcal{H})}\] 
is lower semicontinuous, from which it follows that, for each $n \geq 1$, the set
\[F_n = \{ A\in X \mid \sup_{t \geq 0} e^{\frac{t}{n}}\Vert e^{tA}\Vert_{{\cal{B}}(\mathcal{H})} \leq 1 \}\]
is closed.

Since the inclusion $\bigcup_{n \geq 1} F_n \subset E$ is immediate, we just need to prove that $E\subset\bigcup_{n \geq 1} F_n$. Let $A \in E$; then, by  Corollary \ref{prusscor}, one has that $a := -\sup \sigma(A)>0$. Nevertheless,  $\sup_{t \geq 0} e^{ta} \Vert e^{tA}\Vert_{{\cal{B}}(\mathcal{H})} \leq 1$, from which it follows that  $A \in \bigcup_{n \geq 1} F_n$. Thus, $E$ is an $F_\sigma$ in~$X$. 

Now, since $\{ A \in X \mid  0 \in \sigma(A)\}$ is dense in~$X$, it follows from Corollary \ref{prusscor} that $E^c$ is dense in~$X$; therefore, $E$ is meager in~$X$ and $i)$ is proven. 

It remains to prove $ii)$. By the previous arguments, given $x \in \mathcal{H}$, for each $k\geq 1$ and each  $n\geq 1$, 
\[\bigcup_{t\geq k} \big\{A \in X \mid  \Vert e^{tA}x\Vert_{\mathcal{H}}< \frac{1}{n}  \big\}\]
is open, hence
\begin{eqnarray*}
Y_x:= \{A \in X \mid\lim_{t \to \infty}  \Vert e^{tA}x\Vert_{\mathcal{H}} = 0\} &=& \{A \in X \mid\liminf_{t \to \infty}  \Vert e^{tA}x\Vert_{\mathcal{H}} = 0\}\\ &=& \bigcap_{n\geq 1} \bigcap_{k \geq 1} \bigcup_{t\geq k} \big\{A \in X \ | \  \Vert  e^{tA}x\Vert_{\mathcal{H}} < \frac{1}{n}\big\}
\end{eqnarray*}
is a $G_\delta$ set in~$X$. Since, by Corollary \ref{prusscor}, $\{ A \in X \mid  0 \not  \in \sigma(A)\} \subset Y_x$ is dense in~$X$, it follows that $Y_x$ is a dense $G_\delta$ set in~$X$. 

Finally, let $\{x_k : k \geq 1\}$ be a dense subset in $\mathcal{H}$ (which is separable). Then,
\[Y = \bigcap_{k\geq 1} Y_{x_k}.\]
Namely, the inclusion  $Y \subset \bigcap_{k\geq 1} Y_{x_k}$ is obvious, and the reciprocal one follows from the fact that, for each $A\in\bigcap_{k\geq 1} Y_{x_k}$ and each $x \in \mathcal{H}$, by the Moore-Osgood Theorem, 
\[\lim_{t \to \infty}  \Vert e^{tA}x\Vert_{\mathcal{H}} = \lim_{k \to \infty}   \lim_{t \to \infty}  \Vert e^{tA}x_k\Vert_{\mathcal{H}} =0.\]
Thus, by Baire's Theorem, $Y$ is a dense $G_\delta$ set in~$X$  and $ii)$ is proven.
\end{proof}

\begin{proof}[{Proof} {\rm (Theorem~\ref{maintheoremcategory})}] By Proposition \ref{mainproposition}, we just need to show that, for every $a>0$ and every $\nu\in\mathbb{N}$, 
\begin{enumerate}
\item[i)] $\{ H_V \in X_a^\nu \mid 0 \in \sigma(H_V)\}$ is dense in $X_a^\nu$,
\item[ii)] $\{ H_V \in X_a^\nu \mid 0 \not \in \sigma(H_V)\}$ is dense in $X_a^\nu$.
\end{enumerate}

Let $H_V \in X_a^\nu$ and define, for each $k \geq 1$, $V_k := \chi_{B(0,k)} V$. Then, by Weyl's criterion (Corollary 11.3.6 in~\cite{Oliveira}), the essential spectrum of~$H_{V_k}$ is given by  $\sigma_{\mathrm{ess}}(H_{V_k}) = (-\infty,0]$; moreover, $H_{V_k} \rightarrow H_V$ in $X_a^\nu$. Thus, $\{ H_V \in X_a^\nu \ | \ 0 \in \sigma(H_V)\}$ is dense in $X_a^\nu$.

Now, let $(H_{V_l})$ be a sequence in $X_a^\nu$ such that, for each $l \geq 1$, 
\[V_l:= \frac{l}{l+1}V - \frac{a}{l+1}.\] 
It is clear that, for each $l\geq 1$, $0 \not \in \sigma(H_{V_l})$. Moreover, $H_{V_l} \rightarrow H_V$ in $X_a^\nu$. Therefore, $\{ H_V \in X_a^\nu \mid 0 \not \in \sigma(H_V)\}$  is dense in $X_a^\nu$.  
\end{proof}


\section{Proof of Theorem \ref{spectraltheorem}}\label{secapplic}

\noindent In order to prove Theorem \ref{spectraltheorem}, we need the following corollary, a direct consequence of  Proposition \ref{decpolproposition} and Theorem~\ref{ACOtheorem}.

\begin{corollary}\label{cortheoremnormal} 
Let $A$ be as in the statement of Proposition~\ref{decpolproposition}. Suppose that $(e^{tA})_{t\geq 0}$ is stable but is not exponential stable. Then, 
\[\{x\in\mathcal H \mid d_{\mu_x^{A}}^-(0) = 0  \ and \ d_{\mu_x^{A}}^+(0) = \infty\}\]
is a dense $G_\delta$ set in $\mathcal{H}$.
\end{corollary}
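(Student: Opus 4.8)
The plan is to deduce Corollary~\ref{cortheoremnormal} by combining Proposition~\ref{decpolproposition} with Theorem~\ref{ACOtheorem}, choosing the weight functions $\alpha,\beta$ in the statement of Theorem~\ref{ACOtheorem} to be polynomials, and then translating the dynamical conclusion (a statement about $\liminf$ and $\limsup$ of $\Vert e^{tA}x\Vert_{\mathcal H}$ against weights) into a spectral conclusion (a statement about the scaling exponents $d_{\mu_x^A}^{\mp}(0)$) via Proposition~\ref{decpolproposition}.

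First I would fix an enumeration $(m_n)_{n\geq 1}$ of the positive integers and, for each $n$, set $\alpha_n(t):=t^{1/n}$ and $\beta_n(t):=t^{n}$ on ${\mathbb R}_+$ (say, for $t\geq 1$, with any positive continuous extension near $0$). Each such pair satisfies the hypotheses of Theorem~\ref{ACOtheorem}: $\alpha_n(t)\to\infty$, and $\beta_n(t)e^{-t\epsilon}\to 0$ for every $\epsilon>0$ since polynomials are sub-exponential. Since $(e^{tA})_{t\geq 0}$ is assumed stable but not exponentially stable, Theorem~\ref{ACOtheorem} applies and gives that each
\[
{\mathcal G}_A(\alpha_n,\beta_n)=\Big\{x \mid \limsup_{t\to\infty} t^{1/n}\Vert e^{tA}x\Vert_{\mathcal H}=\infty \ \text{and}\ \liminf_{t\to\infty} t^{n}\Vert e^{tA}x\Vert_{\mathcal H}=0\Big\}
\]
is a dense $G_\delta$ set in $\mathcal H$. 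The key elementary observation is that, for $x\neq 0$, the condition $d_{\mu_x^A}^-(0)=0$ is equivalent to $\limsup_{t\to\infty}\frac{\ln\Vert e^{tA}x\Vert_{\mathcal H}^2}{\ln t}=0$ by Proposition~\ref{decpolproposition}, and this in turn is equivalent to: for every $\delta>0$, $\limsup_{t\to\infty} t^{\delta}\Vert e^{tA}x\Vert_{\mathcal H}=\infty$ — which by monotonicity in $\delta$ is equivalent to $\limsup_{t\to\infty} t^{1/n}\Vert e^{tA}x\Vert_{\mathcal H}=\infty$ for all $n\geq 1$. Similarly, $d_{\mu_x^A}^+(0)=\infty$ is equivalent (again via Proposition~\ref{decpolproposition}) to $\liminf_{t\to\infty}\frac{\ln\Vert e^{tA}x\Vert_{\mathcal H}^2}{\ln t}=-\infty$, hence to $\liminf_{t\to\infty} t^{n}\Vert e^{tA}x\Vert_{\mathcal H}=0$ for all $n\geq 1$. (The case $x=0$ can be excluded trivially, or one notes $0$ lies in the relevant $G_\delta$ set as well since both scaling-exponent conditions are vacuously consistent — but it is cleaner just to intersect with $\{x\neq 0\}$'s complement handling, or observe the set in question is automatically dense $G_\delta$ whether or not $0$ is included.) Consequently,
\[
\{x\in\mathcal H \mid d_{\mu_x^{A}}^-(0)=0 \ \text{and}\ d_{\mu_x^{A}}^+(0)=\infty\}=\bigcap_{n\geq 1}{\mathcal G}_A(\alpha_n,\beta_n)
\]
up to the single point $x=0$, which I would handle by a direct check (e.g. $e^{tA}0=0$ makes both $\liminf/\limsup$ ratios have the convention $d^{\mp}=\infty$, so $x=0$ is not in the left-hand set, consistent with its membership status on the right once one uses strict inequalities; alternatively restrict attention from the start to $x\neq0$ and argue density is unaffected).

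Having identified the set as a countable intersection of dense $G_\delta$ sets, I would finish by invoking the Baire category theorem in the (complete, separable) Hilbert space $\mathcal H$: a countable intersection of dense $G_\delta$ sets is again a dense $G_\delta$ set. The main obstacle, and the only point requiring genuine care rather than bookkeeping, is the equivalence between the ``for all $\delta>0$'' formulation of the scaling-exponent conditions and the single limit/liminf equalities appearing in Proposition~\ref{decpolproposition} — i.e. confirming that $\limsup_t \frac{\ln\Vert e^{tA}x\Vert^2}{\ln t}=0$ exactly captures ``$t^{\delta}\Vert e^{tA}x\Vert\to\infty$ along a subsequence for every $\delta>0$'', and dually for the $\liminf$. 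This is a routine but slightly delicate manipulation of $\limsup$/$\liminf$ with logarithms, keeping track of the factor $2$ from the square and the sign conventions, and making sure the countable family $\{1/n\}$ (resp. $\{n\}$) suffices in place of all $\delta>0$ by monotonicity. Everything else — the verification that polynomial weights satisfy the hypotheses of Theorem~\ref{ACOtheorem}, and the final Baire argument — is immediate.
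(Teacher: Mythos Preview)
Your proposal is correct and is essentially the paper's own argument: the paper states the corollary as a ``direct consequence of Proposition~\ref{decpolproposition} and Theorem~\ref{ACOtheorem}'' without further detail, but in the proof of Theorem~\ref{spectraltheorem} it makes explicit exactly the identity you use, namely that (via Proposition~\ref{decpolproposition}) the condition $d_{\mu_x^A}^-(0)=0$ and $d_{\mu_x^A}^+(0)=\infty$ is equivalent to $\bigcap_{n\geq 1}\{\limsup_t t^{1/n}\Vert e^{tA}x\Vert=\infty\ \text{and}\ \liminf_t t^{n}\Vert e^{tA}x\Vert=0\}$, and then Theorem~\ref{ACOtheorem} plus Baire finish the job. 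Your handling of the point $x=0$ is slightly cluttered but lands on the right conclusion: $x=0$ belongs to neither side, so the equality is clean.
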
 

\begin{proof}[{Proof} {\rm (Theorem~\ref{spectraltheorem})}]  Since, by Proposition \ref{decpolproposition},
\begin{eqnarray*} 
J_a^\nu(f) &=&\{H  \mid d_{\mu_f^H}^-(0) = 0 \ \textrm{and} \ d_{\mu_f^H}^+(0) = \infty\}\\ &=& \bigcap_{n \geq 1}\big\{ H \mid \limsup_{t \to \infty} t^{1/n} \Vert e^{tH}f\Vert_{{\mathrm L}^2({\mathbb{R}}^\nu)} = \infty \;\; \textrm{and} \;\; \liminf_{t \to \infty} t^{n} \Vert e^{tH}f\Vert_{{\mathrm L}^2({\mathbb{R}}^\nu)} = 0\big\},
\end{eqnarray*}
it follows from the arguments presented in the proof of Proposition~\ref{mainproposition} that, for every $f \in {\mathrm L}^2({\mathbb{R}}^\nu)$, $J_a^\nu(f)$ is a $G_\delta$ set in $X_a^\nu$.   

Now, let
\[C_a^\nu = \{H\in X_a^\nu \mid (e^{tH})_{t\geq 0} \text{ is stable but not exponentially stable}\}.\]
It follows from Theorem \ref{maintheoremcategory} that $C_a^\nu$ is a dense $G_\delta$ set in $X_a^\nu$. Let $\{ H_k : k \geq 1 \}$ be an enumerable dense subset of $C_a^\nu$ (which is separable, since $X_a^\nu$ is separable). So, by Corollary \ref{cortheoremnormal},
\[G_a^\nu := \bigcap_{k \geq 1} \{f  \mid d_{\mu_f^{H_k}}^-(0) = 0  \ \textrm{and} \ d_{\mu_f^{H_K}}^+(0) = \infty\}\]
is a dense $G_\delta$ set in ${\mathrm L}^2({\mathbb{R}}^\nu)$. Nevertheless, for every $f \in G_a^\nu$, $J_a^\nu(f) \supset \{ H_k : k \geq 1 \}$ is a dense $G_\delta$ set in $X_a^\nu$. 
\end{proof}

The next example, together with Theorem \ref{spectraltheorem}, says that for each $f \in G_a^\nu$ and each $H \in J_a^\nu(f)$ we have $f \not \in \rng H$, or equivalently, the partial differential equation $H u = f$ has no solution in ${\cal{D}}(H)$.

\begin{example}\label{examplerng}{\rm
Let~$A$ be a negative self-adjoint operator. Then, for $u \in \rng A$, $d_{\mu_u^{A}}^{\mp}(0) \geq 2$.}
\end{example}

Example \ref{examplerng} can be seen as a statement from the fact that every spectral measure associated with every vector of the range of a negative self-adjoint operator has a certain local regularity with respect to the Lebesgue measure. We note that this is a direct consequence of Propositions \ref{decpolproposition} and~\ref{scarefproposition}. 
\begin{proposition}\label{scarefproposition} Let~$A$ be a negative self-adjoint operator. Then, for $u \in \rng A$ and every $x \in A^{-1} \{ u \}$, one has, for each $t>0$,
\begin{equation*}
\|e^{tA}u\|_{\mathcal{H}} \leq \  \frac{\Vert x \Vert_{\mathcal{H}}}{e\;t} \,.
\end{equation*}
\end{proposition}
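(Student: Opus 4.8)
The plan is to use the functional calculus for the negative self-adjoint operator $A$. Fix $u \in \rng A$ and $x \in A^{-1}\{u\}$, so $u = Ax$ and hence $e^{tA}u = e^{tA}Ax = A e^{tA}x$. The key point is to estimate the operator norm of $A e^{tA}$ via the Spectral Theorem: for $\lambda \in \sigma(A) \subset (-\infty,0]$ we have
\[
\|A e^{tA}\|_{\mathcal B(\mathcal H)} \leq \sup_{\lambda \leq 0} |\lambda| e^{t\lambda} = \sup_{s \geq 0} s e^{-ts}.
\]
A one-variable calculus computation shows that $s \mapsto s e^{-ts}$ attains its maximum on $[0,\infty)$ at $s = 1/t$, with value $1/(et)$. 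Therefore $\|A e^{tA}\|_{\mathcal B(\mathcal H)} \leq 1/(et)$ for every $t>0$.

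Combining these observations, for each $t > 0$,
\[
\|e^{tA}u\|_{\mathcal H} = \|A e^{tA} x\|_{\mathcal H} \leq \|A e^{tA}\|_{\mathcal B(\mathcal H)}\, \|x\|_{\mathcal H} \leq \frac{\|x\|_{\mathcal H}}{e\,t},
\]
which is exactly the claimed bound. The only mildly delicate point is to justify that $e^{tA}$ maps $\mathcal H$ into $\mathcal D(A)$ and that $e^{tA}Ax = A e^{tA} x$ for $x \in \mathcal D(A)$; this is the standard smoothing property of analytic semigroups generated by self-adjoint operators and follows directly from the Spectral Theorem (the function $\lambda \mapsto \lambda e^{t\lambda}$ being bounded on $\sigma(A)$), together with the fact that $A$ commutes with its own functional calculus. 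No Baire-category input is needed here; the statement is entirely a consequence of the functional calculus. I expect no real obstacle beyond writing the commutation step carefully.

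For completeness, to obtain Example \ref{examplerng} from this proposition one observes that, for $u \in \rng A$ with $u \neq 0$, the bound $\|e^{tA}u\|_{\mathcal H} \leq \|x\|_{\mathcal H}/(et)$ gives
\[
\limsup_{t\to\infty} \frac{\ln \|e^{tA}u\|_{\mathcal H}^2}{\ln t} \leq -2,
\]
so that $d_{\mu_u^A}^-(0) \geq 2$ by Proposition \ref{decpolproposition}; since $d_{\mu_u^A}^+(0) \geq d_{\mu_u^A}^-(0)$ always holds, we get $d_{\mu_u^A}^{\mp}(0) \geq 2$ (the case $u=0$ being trivial since then $d^{\mp} = \infty$). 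This is the content of Example \ref{examplerng}.
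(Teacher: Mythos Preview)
Your proof is correct and essentially matches the paper's own argument: the paper writes $\|e^{tA}u\|_{\mathcal H}^2 = \|Ae^{tA}x\|_{\mathcal H}^2 = \int_{-\infty}^0 y^2 e^{2ty}\,d\mu_x^A(y)$ via the spectral measure of~$x$ and then bounds the integrand pointwise by $(et)^{-2}$, which is exactly your operator-norm estimate $\|Ae^{tA}\|_{\mathcal B(\mathcal H)}\le 1/(et)$ phrased at the level of the spectral integral. The commutation step and the calculus maximum are identical in both proofs.
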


\begin{proof} Let $x \in A^{-1}\{u\}$. Then, by the Spectral Theorem,  for each $t>0$, 
\begin{eqnarray*}
 t^2 \|e^{tA}u\|_{\mathcal{H}}^2 &=& t^2 \|Ae^{tA}x\|_{\mathcal{H}}^2 = \int_{-\infty}^0  \ (ty)^2 \ e^{2ty}\,  {\mathrm d}\mu_x^A(y)\\ &\leq& \frac{1}{e^2}\int_{-\infty}^0  1 \, {\mathrm d}\mu_x^A(y) = \frac{\Vert x \Vert_{\mathcal{H}}^2}{e^2}.
\end{eqnarray*}
\end{proof} 

\begin{remark}
\end{remark}
{\rm \begin{enumerate}
\item[i)] We note that the polynomial decaying rate obtained in Proposition~\ref{scarefproposition} is optimal. Indeed, define $M : {\mathcal D}(M)\subset {\mathrm L}^2[0, \infty) \longrightarrow {\mathrm L}^2[0, \infty)$ by 
\[(Mu)(y) = -y u(y),\] where $u\in{\mathcal{D}}(M) := \{u \in {\mathrm L}^2[0, \infty) \ | \  yu \in {\mathrm L}^2[0, \infty) \}.$
Consider $\frac{1}{2}<\delta<1$, and then define $f_\delta :[0,\infty) \to\mathbb{R}$ by the action $f_\delta(y) =  \chi_{[0,1]}  y^\delta$; $f_\delta$ clearly belongs to $\rng M$. Moreover, for every $0<\epsilon\leq 1$,
\[\mu_{f_\delta}^M(B(0,\epsilon))  = \int_0^{\epsilon} |f_\delta|^2 dy = \int_0^{\epsilon} y^{2\delta} dy = \frac{\epsilon^{2\delta+1}}{2\delta + 1}.\]
Thus, by Proposition~\ref{decpolproposition}, $\|e^{tM}f_\delta\|_{\mathrm{L}^2[0,\infty)} \geq  C_{f_\delta} t^{-1/2-\delta}$, for all $t \geq 1$, where   $C_{f_\delta}$ is a constant depending only on $f_\delta$. 
\item[ii)] Let $A$ be as in the statement of Proposition \ref{scarefproposition}. If $a =  - \sup \sigma(A) > 0$, then $(e^{tA})_{t \geq 0}$ is exponentially stable. Actually,  $\|e^{tA}\|_{\mathcal{B}(\mathcal{H})} = O(e^{-ta})$, which implies that, for each $x \in \mathcal{H}$, $\|e^{tA}x\|_{\mathcal{H}} = O(e^{-ta})$. Proposition~\ref{scarefproposition} presents more information about the decay of $\|e^{tA}u\|_{\mathcal{H}}$  in  case  $u\in\rng(A+a\textbf{1})$, since, in this case, it shows that there exists $C_u>0$, depending only on~$u$, such that, for every $t>0$,
\[\|e^{tA}u\|_{\mathcal{H}} \le C_u\, \frac{e^{-ta}}{t}.\] 
 Namely, let $t \in {\mathbb{R}}$ and $v \in \mathcal{H}$; then,
\[\| v \|_{{\mathcal{H}}}  =\| e^{-ta{\bf 1}} e^{ta{\bf 1}} v \|_{{\mathcal{H}}} \leq \|e^{-ta{\bf 1}}\|_{\mathcal{B}(\mathcal{H})} \|  e^{ta{\bf 1}} v \|_{{\mathcal{H}}}  \leq e^{-ta} \|  e^{ta{\bf 1}} v \|_{{\mathcal{H}}}.\]
If $u\in\rng(A+a\textbf{1})$ and $x \in {\cal{D}}(A)$ is such that $(A+a\textbf{1})x =u$, then, by Proposition \ref{scarefproposition},
\[\Vert x \Vert_{\mathcal{H}} \  \frac{1}{e\,t} \geq \|e^{t(A+a\textbf{1})}u \|_{{\mathcal{H}}} = \|e^{ta{\bf 1}} e^{tA}u\|_{{\mathcal{H}}} \geq e^{ta} \|e^{tA}u\|_{{\mathcal{H}}}.\]
\end{enumerate}}


\begin{center} \Large{Acknowledgments} 
\end{center}

M.A. was supported by CAPES (a Brazilian government agency);  S.L.C. thanks the partial support by FAPEMIG (Universal Project 001/17/CEX-APQ-00352-17); and C.R.dO. thanks the partial support by CNPq (contract 303503/2018-1). The authors are grateful to Pedro T. P. Lopes for fruitful discussions and helpful remarks.


\noindent  Email: moacir@ufam.edu.br, Departamento de Matem\'atica, UFAM, Manaus, AM, 369067-005 Brazil

\noindent  Email: silas@mat.ufmg.br, Departamento de Matem\'atica, UFMG, Belo Horizonte, MG, 30161-970 Brazil

\noindent  Email: oliveira@dm.ufscar.br,  Departamento  de  Matem\'atica,   UFSCar, S\~ao Carlos, SP, 13560-970 Brazil

\end{document}